\definecolor{webgreen}{rgb}{0,.5,0}
\definecolor{webbrown}{rgb}{.6,0,0}
\theoremstyle{plain}
\newtheorem{theorem}{Theorem}
\newtheorem{corollary}[theorem]{Corollary}
\newtheorem{lemma}[theorem]{Lemma}
\theoremstyle{definition}
\newtheorem{example}[theorem]{Example}
\newtheorem{remark}[theorem]{Remark}
\newcommand{\NN}{\mathbb{N}}
\newcommand{\ZZ}{\mathbb{Z}}
\newcommand{\RR}{\mathbb{R}}
\newcommand{\CC}{\mathbb{C}}
\newcommand{\bb}{\mathtt{b}}
\newcommand{\MA}{\mathcal{A}}
\newcommand{\MC}{\mathcal{C}}
\newcommand{\MF}{\mathcal{F}}
\newcommand{\FC}{\mathfrak{c}}
\newcommand{\Cat}{\mathfrak{C}}
\newcommand{\HC}{\mathtt{C}}
\newcommand{\andq}{\quad \text{and} \quad}
\newcommand{\textq}[1]{\quad \text{#1} \quad}
\newcommand{\ndots}{...}
\newcommand{\sims}{\,\,\sim\,\,}
\newcommand{\divsum}{{}\!'\,}
\newcommand{\seqnum}[1]{\href{https://oeis.org/#1}{\rm \underline{#1}}}
\begin{document}

\title[Asymptotic products of binomial and multinomial coefficients revisited]
{Asymptotic products of binomial and multinomial\\ coefficients revisited}
\author{Bernd C. Kellner}
\address{G\"ottingen, Germany}
\email{bk@bernoulli.org}
\subjclass[2020]{11B65 (Primary), 11Y60, 41A60 (Secondary)}
\keywords{Asymptotic constant and formula, Glaisher--Kinkelin constant, factorial,
binomial and multinomial coefficient, central binomial coefficient, Catalan number.}

\begin{abstract}
In this note, we consider asymptotic products of binomial and multinomial
coefficients and determine their asymptotic constants and formulas. Among them,
special cases are the central binomial coefficients, the related Catalan
numbers, and binomial coefficients in a row of Pascal's triangle. For the
latter case, we show that it can also be derived from a limiting case of
products of binomial coefficients over the rows. The asymptotic constants are
expressed by known constants, for example, the Glaisher--Kinkelin constant.
In addition, the constants lie in certain intervals that we determine precisely.
Subsequently, we revisit a related result of Hirschhorn and clarify the given
numerical constant by showing the exact expression.
\end{abstract}

\maketitle


\section{Introduction}

The Glaisher--Kinkelin constant $\MA$ occurs in many asymptotic formulas.
It was independently found by Glaisher~\cite{Glaisher:1878}
and Kinkelin~\cite{Kinkelin:1860} by considering the asymptotic formula of the
\textit{hyperfactorial}:
\begin{equation} \label{eq:GK}
  \prod_{\nu=1}^n \nu^\nu \sims \MA \times
  n^{\binom{n+1}{2}+\frac{1}{12}} \, e^{-\frac{1}{4} n^2}
  \quad \text{as\ } n \to \infty.
\end{equation}
The notation $c \times f(x)$ means here that $c$ is an asymptotic constant,
$f$ is an increasing function, and $\log f(x)$ has no constant term for $x \to \infty$.
Regarding products of factorials, the author~\cite[Theorem 12]{Kellner:2009}
showed for integers $k \geq 1$ the asymptotic formula
\begin{equation} \label{eq:facFk}
  \prod_{\nu=1}^n (k\nu)!  \sims
  \MF_k \, \MA^k \, (2\pi)^{\frac{1}{4}} \times
  \left( k e^{-\frac{3}{2}} n \right)^{k \binom{n+1}{2}}
  \left( 2\pi k e^{\frac{k}{2}-1} n \right)^{\frac{n}{2}}
  n^{\frac{k^2+3k+1}{12k}} \quad \text{as\ } n \to \infty
\end{equation}
with certain constants $\MF_k$ obeying the limit behavior that
$\lim\limits_{k \to \infty} \MF_k = 1$.

The main purpose of this paper is to discuss certain types of asymptotic products
of binomial and multinomial coefficients and to determine their asymptotic constants
and formulas. It turns out that formulas~\eqref{eq:GK} and~\eqref{eq:facFk}
play a major role in this context, as well as the constants $\MA$ and $\MF_k$.

The next section shows further properties of the constants~$\MF_k$.
Section~\ref{sec:main} contains the main results and supplementary examples.
The subsequent section gives their proofs. In the last section, we revisit a
related result of Hirschhorn~\cite{Hirschhorn:2013}.
Well-known results, which we use implicitly, can be found, e.g., in the book of
Graham et~al.~\cite{GKP:1994}. The numerical values were computed by
\textsl{Mathematica} with 30 decimal digit accuracy and truncated to 25 digits
after the decimal point. All given intervals are the best possible throughout
the paper.


\section{Properties of the constants \texorpdfstring{$\MF_k$}{}}

For a divergent series expansion of a function $f: \RR^+ \to \RR$,
we use the notation
\[
  f(x) = \sum_{\nu \geq 1} \divsum f_\nu(x)
  = \sum_{\nu=1}^{m-1} f_\nu(x) + \theta_m(x) f_m(x),
\]
where the sum is truncated at a suitable index ${m \geq 1}$ and ${\theta_m(x) \in (0,1)}$.
Let $B_n$ be the $n$th Bernoulli number defined by
\[
  \frac{z}{e^z-1} = \sum_{n=0}^\infty B_n \frac{z^n}{n!} \quad (|z| < 2 \pi).
\]
The Glaisher--Kinkelin constant $\MA \approx 1.2824271291006226368753425\ndots$
is given by
\[
  \log \MA = \tfrac{1}{12} - \zeta'(-1)
  = \tfrac{1}{12}(\gamma + \log (2\pi)) - \tfrac{\zeta'(2)}{2\pi^2},
\]
where $\gamma = -\Gamma'(1) \approx 0.5772156649015328606065120\ndots$
is Euler’s constant, $\zeta$ is the Riemann zeta function, and $\Gamma$ is the
gamma function. The constants $\MF_k$ can be expressed as follows.

\begin{theorem}[Kellner~{\cite[Theorems 12 and 13]{Kellner:2009}}] \label{thm:kel}
For $k \geq 1$, we have
\begin{align}
  \log \MF_k &= \tfrac{k}{4} \log (2\pi) - \tfrac{k^2+1}{k} \log \MA
  + \tfrac{1}{12k}(1 - \log k)
  - \sum_{\nu=1}^{k-1} \tfrac{\nu}{k} \log \Gamma(\tfrac{\nu}{k}) \label{eq:logFk1} \\
    &= \frac{\gamma}{12k} + \sum_{j \geq 2} \divsum
  \frac{B_{2j} \, \zeta(2j-1)}{2j (2j-1) \, k^{2j-1}}. \label{eq:logFk2}
\end{align}
\end{theorem}

Clearly, it follows from \eqref{eq:logFk2} that $\lim\limits_{k \to \infty} \MF_k = 1$
as mentioned before (for further properties of $\MF_k$, see \cite[Theorem 12]{Kellner:2009}).
We can make this limit behavior even more precise (cf. Table~\ref{tbl:Fk}).

\begin{corollary} \label{cor:seq}
The sequence $(\MF_k)_{k \geq 1}$ is strictly decreasing with limit $1$.
\end{corollary}

\begin{proof}
Let $k \geq 1$. By Theorem~\ref{thm:kel}, we have a truncated expansion
\begin{equation} \label{eq:logFk3}
  \log \MF_k = \frac{\gamma}{12k} + \widetilde{\theta}_k \frac{B_4 \, \zeta(3)}{12k^3}
  \textq{with} \widetilde{\theta}_k \in (0,1).
\end{equation}
Note that ${B_4 = -\frac{1}{30}}$ and the remainder term is relatively small.
We compare the infimum and supremum of the expansion of $\log \MF_k$ and $\log \MF_{k+1}$,
respectively. Since
\[
  \frac{\gamma}{12k} - \frac{\zeta(3)}{360k^3} > \frac{\gamma}{12(k+1)}
  \textq{is equivalent to}
  \frac{k^2}{k+1} > \frac{\zeta(3)}{30\gamma} \approx 0.069416\ndots,
\]
it follows that $\log \MF_k > \log \MF_{k+1}$ for all $k \geq 1$,
implying the result.
\end{proof}

We need the following lemmas later on.

\begin{lemma} \label{lem:estim}
Define $g(a,b) = \frac{1}{a} + \frac{1}{b} - \frac{1}{a+b}$.
For $a, b \in \NN$ and $a+b \geq 3$, we have $\frac{7}{6} \geq g(a,b) > 0$.
\end{lemma}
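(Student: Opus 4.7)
The plan is to separate the two inequalities and handle each by an elementary argument. Positivity is essentially immediate: since $a, b \geq 1$, I have $a < a+b$, so $\frac{1}{a} > \frac{1}{a+b}$, and therefore
\[
  g(a,b) = \frac{1}{a} + \frac{1}{b} - \frac{1}{a+b} > \frac{1}{b} > 0.
\]
This does not even need the hypothesis $a+b \geq 3$.

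For the upper bound, I would exploit the symmetry and monotonicity of $g$. Treating $g$ as a function on $[1,\infty)^2 \subset \RR^2$, one checks
\[
  \frac{\partial g}{\partial b} = -\frac{1}{b^2} + \frac{1}{(a+b)^2} < 0,
\]
so $g(a,\cdot)$ is strictly decreasing in $b$; by symmetry the same holds in $a$. Hence the supremum of $g$ over the admissible set $\{(a,b) \in \NN^2 : a+b \geq 3\}$ is attained at the \emph{smallest} allowed integer values. Here the constraint $a + b \geq 3$ forbids $(1,1)$ (which would otherwise give $g(1,1) = \tfrac{3}{2}$), so the minimal admissible pairs are $(1,2)$ and $(2,1)$, and a direct computation gives
\[
  g(1,2) = g(2,1) = 1 + \tfrac{1}{2} - \tfrac{1}{3} = \tfrac{7}{6}.
\]

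To finish, I would formalize the reduction in two cases. If $a = 1$, then $b \geq 2$ and monotonicity in the second variable gives $g(1,b) \leq g(1,2) = \tfrac{7}{6}$; by symmetry the same holds if $b = 1$. If instead $a, b \geq 2$, monotonicity in each variable yields $g(a,b) \leq g(2,2) = \tfrac{3}{4} < \tfrac{7}{6}$. Combining the cases completes the upper bound, with equality exactly at $(a,b) \in \{(1,2),(2,1)\}$.

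The only subtlety — and it is minor — is remembering that the restriction $a+b \geq 3$ is crucial: without it the maximum would jump to $\tfrac{3}{2}$ at $(1,1)$. The monotonicity computation itself is routine, so there is no genuine obstacle; the argument is essentially a boundary check after a one-line derivative calculation.
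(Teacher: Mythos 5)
Your proof is correct and follows essentially the same route as the paper: both reduce the upper bound to the case where one argument equals $1$ (the case $a,b\ge 2$ being handled by a cruder bound) and identify the extremal pair $(1,2)$ giving $\tfrac{7}{6}$; the paper maximizes $g(1,b)=1+\tfrac{1}{b(b+1)}$ explicitly where you invoke monotonicity via the derivative, a cosmetic difference. No gaps.
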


\begin{proof}
Note that $g(a,b) = \frac{1}{a} + \frac{a}{b(a+b)} > 0$ for $a,b \geq 1$.
Further, we have $g(a,b) = \frac{1}{a} + \frac{1}{b} - \frac{1}{a+b} < 1$ for
$a,b \geq 2$. By symmetry, there remains the case $a=1$ and $a+b \geq 3$.
Thus, $g(a,b) = 1 + \frac{1}{b(b+1)}$ has its maximum at $b=2$, showing that
$g(a,b) \leq 1 + \frac{1}{6}$.
\end{proof}

\begin{lemma} \label{lem:interval}
Let $a > b \geq 1$ and $\FC_{a,b} = \frac{\MF_a}{\MF_{b} \, \MF_{a-b}}$.
Then $\FC_{a,b} \in [\FC_{2,1}, 1)$.
\end{lemma}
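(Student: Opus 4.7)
The plan is to combine the truncated one-term expansion from Theorem~\ref{thm:kel} with Lemma~\ref{lem:estim}. Writing $L_k := \gamma/(12k)$ and $R_k := \zeta(3)/(360 k^3)$, the theorem (truncation index $m=2$, together with $B_4 = -\tfrac{1}{30}$) gives the enclosure
\[
L_k - R_k < \log \MF_k < L_k \quad (k \geq 1).
\]
Setting $c = a-b$ and noting $L_a - L_b - L_c = -\tfrac{\gamma}{12}\, g(b,c)$, this furnishes the two-sided estimate
\[
-\tfrac{\gamma}{12}\, g(b,c) - R_a \;<\; \log \FC_{a,b} \;<\; -\tfrac{\gamma}{12}\, g(b,c) + R_b + R_c.
\]

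For the upper endpoint $\FC_{a,b} < 1$, I would use that $\FC_{a,b}$ is symmetric in $b$ and $c$ and assume without loss of generality that $b \leq c$. From $g(b,c) > \tfrac{1}{b}$ (since $\tfrac{1}{c} > \tfrac{1}{b+c}$) and $R_b + R_c \leq 2 R_b$, the upper bound reduces to
\[
\frac{\gamma}{12b} > \frac{\zeta(3)}{180 b^3}, \quad \text{i.e.,} \quad b^2 > \frac{\zeta(3)}{15\gamma} \approx 0.139,
\]
which is satisfied for every integer $b \geq 1$, so $\log \FC_{a,b} < 0$.

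For the lower endpoint $\FC_{a,b} \geq \FC_{2,1}$, the case $a = 2$ is trivial since it forces $b = c = 1$ and yields equality. For $a \geq 3$, Lemma~\ref{lem:estim} applied to the pair $(b,c)$ (whose sum equals $a \geq 3$) gives $g(b,c) \leq \tfrac{7}{6}$, while the monotonicity $R_a \leq R_3 = \zeta(3)/9720$ holds. The lower bound above then produces
\[
\log \FC_{a,b} > -\frac{7\gamma}{72} - \frac{\zeta(3)}{9720} \approx -0.05624.
\]
Comparing with $\log \FC_{2,1} = \log \MF_2 - 2\log \MF_1 \approx -0.06692$, evaluated directly from Theorem~\ref{thm:main2} (or Table~\ref{tbl:Fk}), yields the strict inequality $\FC_{a,b} > \FC_{2,1}$.

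The chief technical concern is whether a single-term truncation of the series for $\log \MF_k$ is already sharp enough to separate $-\tfrac{7\gamma}{72}$ from $\log \FC_{2,1}$ in the extreme case $(a,b) = (3,1)$; the numerical margin of roughly $10^{-2}$ is comfortably wider than $R_3$, so no second correction term is required.
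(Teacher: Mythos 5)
Your proof is correct and follows essentially the same route as the paper's: the same one-term truncated enclosure $\frac{\gamma}{12k}-\frac{\zeta(3)}{360k^{3}}<\log \MF_k<\frac{\gamma}{12k}$ from Theorem~\ref{thm:kel}, combined with Lemma~\ref{lem:estim} applied to the pair $(b,a-b)$, with the case $a=2$ split off. The only cosmetic differences are that the paper obtains $\FC_{a,b}<1$ more simply from the monotonicity of $(\MF_k)_{k\geq 1}$ and $\MF_k>1$, and bounds $\log \FC_{2,1}$ from above by the same enclosure rather than using its numerical value; both variants are sound.
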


\begin{proof}
Let $a > b \geq 1$. By Corollary~\ref{cor:seq},
we have $\min(\MF_{b},\MF_{a-b}) > \MF_{a} > 1$ and so $\FC_{a,b} < 1$.
Letting $a = 2b$, we get the supremum by
$\lim\limits_{b \to \infty} \FC_{a,b} = \lim\limits_{b \to \infty} \MF_{2b}/\MF_b^2 = 1$.
Now, set $\alpha = \frac{\gamma}{12}$ and $\beta = \frac{\zeta(3)}{360}$.
Following the proof of Corollary~\ref{cor:seq}, we define
$\log \MF^+_k = \frac{\alpha}{k}$ and
$\log \MF^-_k = \frac{\alpha}{k} - \frac{\beta}{k^3}$
as the supremum and infimum of \eqref{eq:logFk3}, respectively.
On the one side, we have
\[
  \log \FC_{2,1} < \log \MF^+_2 - 2 \log \MF^-_1
  = - \tfrac{3}{2} \alpha + 2 \beta = A.
\]
Let $a \geq 3$ and $a > b \geq 1$, so $\FC_{a,b} \neq \FC_{2,1}$.
On the other side, we have
\[
  \log \FC_{a,b} > \log \MF^-_a - \log \MF^+_b - \log \MF^+_{a-b}
  = ( \tfrac{1}{a} - \tfrac{1}{b} - \tfrac{1}{a-b} ) \, \alpha  - a^{-3} \beta = B.
\]
We show that $B > A$, which implies that $\FC_{a,b} > \FC_{2,1}$.
The inequality turns into
\begin{equation} \label{eq:estim}
  B' = \tfrac{3}{2} + \tfrac{1}{a} - \tfrac{1}{b} - \tfrac{1}{a-b}
  > ( 2 + a^{-3} ) \tfrac{\beta}{\alpha} = A'.
\end{equation}
Using Lemma~\ref{lem:estim} with parameters $a-b$ and $b$, we infer that
$B' = \tfrac{3}{2} - g(a-b,b) \geq \tfrac{3}{2} - \tfrac{7}{6} = \tfrac{1}{3}$.
Since $3 \tfrac{\beta}{\alpha} \approx 0.208250\ndots > A'$,
this implies \eqref{eq:estim} and shows the result.
\end{proof}

The first few constants $\MF_k$ can be evaluated as in Table~\ref{tbl:Fk} below.
By Euler's reflection formula
\begin{align*}
  \Gamma(z) \, \Gamma(1-z) &= \frac{\pi}{\sin(\pi z)} \quad (z \in \CC \setminus \ZZ) \\
\intertext{and Legendre's duplication formula}
  \Gamma(z) \, \Gamma(z+\tfrac{1}{2}) &= 2^{1-2z} \sqrt{\pi} \, \Gamma(2z),
\end{align*}
one can reduce terms of the form $\Gamma(\tfrac{\nu}{k})$ that occur in $\MF_k$ using \eqref{eq:logFk1}.
Let $\phi = \frac{\sqrt{5}+1}{2}$ be the golden ratio.

\begin{table}[H] \small
\setstretch{1.5}
\begin{center}
\begin{tabular}{r@{\;=\;}ll}
  \toprule
  \multicolumn{2}{c}{Constant} & \multicolumn{1}{c}{Value} \\\hline
  $\MF_1$ & $(2\pi)^{\frac14} \, e^{\frac{1}{12}} \MA^{-2}$
  & $1.0463350667705031809809506\ndots$ \\
  $\MF_2$ & $(2\pi)^{\frac14} \, 2^{\frac{5}{24}} \, e^{\frac{1}{24}} \MA^{-\frac{5}{2}}$
  & $1.0239374116371184015779507\ndots$ \\
  $\MF_3$ & $(2\pi)^{\frac{1}{12}} \, 3^{\frac{11}{36}} \, e^{\frac{1}{36}} \,
    \Gamma(\frac{1}{3})^{\frac{1}{3}} \MA^{-\frac{10}{3}} $
  & $1.0160405370646209912870365\ndots$ \\
  $\MF_4$ & $2^{\frac{7}{12}} \, e^{\frac{1}{48}} \Gamma(\frac{1}{4})^{\frac{1}{2}}
    \MA^{-\frac{17}{4}}$
  & $1.0120458980239446462423302\ndots$ \\
  $\MF_5$ & $(2\pi)^{-\frac{3}{20}} \, 5^{\frac{1}{3}} \, \phi^{-\frac{1}{10}} \, e^{\frac{1}{60}}
    \Gamma(\frac{1}{5})^{\frac{3}{5}} \Gamma(\frac{2}{5})^{\frac{1}{5}}
    \MA^{-\frac{26}{5}}$
  & $1.0096399728364770508687282\ndots$ \\
  $\MF_6$ & $(2\pi)^{-\frac{7}{12}} \, 2^{\frac{25}{72}} \, 3^{\frac{47}{72}} \, e^{\frac{1}{72}}
    \Gamma(\frac{1}{3})^{\frac{5}{3}} \MA^{-\frac{37}{6}}$
  & $1.0080336272420732654455927\ndots$ \\
  \bottomrule
\end{tabular}
\vspace*{-6pt}
\caption{First few values of $\MF_k$.}
\label{tbl:Fk}
\end{center}
\end{table}


\section{Main results and examples}
\label{sec:main}

We first consider asymptotic products of multinomial coefficients.
It becomes apparent that the occurring asymptotic constants are intimately
related to the constants~$\MF_k$.

\begin{theorem} \label{thm:main}
Let $r \geq 2$. Let $a = b_1 + \cdots + b_r$ and $\bb = (b_1, \ldots, b_r)$
with $b_\nu \geq 1$. Asymptotically, we have
\[
  \prod_{n=1}^m \binom{a n}{\bb n} =
  \prod_{n=1}^m \binom{a n}{b_1 n, \, b_2 n, \, \cdots\!, \, b_r n}
  \sims \MC_{a,\bb} \times \frac{P_a(m)}{P_{b_1}(m) \cdots P_{b_r}(m)}
  \quad \text{as\ } m \to \infty
\]
with the functions
\[
  P_k(x) = \left( k^k \right)^{\binom{x+1}{2}}
  \left( \tfrac{2\pi k}{e} \, x \right)^{\frac{x}{2}}
  x^{\frac{3k+1}{12k}} \quad (k \geq 1)
\]
and the asymptotic constant
\[
  \MC_{a,\bb} = \FC_{a,\bb} \, (2\pi)^{\frac{1}{4}(1-r)}, \textq{where}
  \FC_{a,\bb} = \frac{\MF_a}{\MF_{b_1} \cdots \MF_{b_r}} \in (0,1).
\]
\end{theorem}

The asymptotic products of binomial coefficients can be derived as a special case.
The occurring asymptotic constants lie in relatively small intervals.

\begin{theorem} \label{thm:main2}
Let $a > b \geq 1$ and $c=a-b$. Asymptotically, we have
\[
  \prod_{n=1}^m \binom{an}{bn} \sims \MC_{a,b} \times P_{a,b}(m)
  \quad \text{as\ } m \to \infty
\]
with the function
\[
  P_{a,b}(x) = \left( \tfrac{a^a}{b^b \, c^c} \right)^{\binom{x+1}{2}}
  x^{\frac{1}{12}(\frac{1}{a}-\frac{a}{bc})-\frac{1}{4}} \Big/
  \left(\tfrac{2\pi}{e}\tfrac{bc}{a} x\right)^{\frac{x}{2}}
\]
and the asymptotic constant
\[
  \MC_{a,b} = \FC_{a,b} \, (2\pi)^{-\frac{1}{4}} \in [\MC_{2,1}, (2\pi)^{-\frac{1}{4}}),
  \textq{where}
  \FC_{a,b} = \frac{\MF_a}{\MF_{b} \, \MF_{c}} \in [\FC_{2,1}, 1)
\]
and
\begin{alignat*}{2}
  \FC_{2,1} =\;& \MF_2 / \MF^2_1 &&\approx 0.9352589011148368571152882\ndots, \\
  & (2\pi)^{-\frac{1}{4}} &&\approx 0.6316187777460647012900105\ndots, \\
  \MC_{2,1} =\;& \FC_{2,1} (2\pi)^{-\frac{1}{4}} &&\approx 0.5907270839982808449347463\ndots.
\end{alignat*}
\end{theorem}

As an application, we find formulas for the related products of the central binomial
coefficients $\binom{2n}{n}$, as well as of the Catalan numbers
$\Cat_n = \frac{1}{n+1} \binom{2n}{n}$.

\begin{corollary} \label{cor:cat}
Asymptotically, we have
\[
  \prod_{n=1}^m \binom{2n}{n} \sims \MC_{2,1} \times
  \frac{2^{m^2} \bigl( \frac{4e}{\pi} \bigr)^{\!\!\frac{m}{2}}}{m^{\frac{m}{2} + \frac{3}{8}}} \andq
  \prod_{n=1}^m \Cat_n \sims \MC_{\mathrm{Cat}} \times
  \frac{2^{m^2} \bigl( \frac{4 e^3}{\pi} \bigr)^{\!\!\frac{m}{2}}}{m^{\frac{3}{2}m + \frac{15}{8}}}
  \quad \text{as\ } m \to \infty
\]
with
\begin{alignat*}{2}
  \MC_{2,1} &= \frac{2^{\frac{5}{24}}\,\MA^{\frac{3}{2}}}{(2\pi)^{\frac{1}{2}}\,e^{\frac{1}{8}}}
    &&\approx 0.5907270839982808449347463\ndots \\
\shortintertext{and}
  \MC_{\mathrm{Cat}} &= \MC_{2,1} (2\pi)^{-\frac{1}{2}}
    &&\approx 0.2356660099851628316196795\ndots,
\end{alignat*}
respectively.
\end{corollary}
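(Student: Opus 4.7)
The plan is to specialize Theorem~\ref{thm:main2} to $a=2$, $b=c=1$ for the central binomial asymptotic, and then to pass to the Catalan product via the telescoping identity $\prod_{n=1}^m\Cat_n = \prod_{n=1}^m\binom{2n}{n}\big/(m+1)!$ together with Stirling's formula.

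First, I would substitute $a=2$, $b=c=1$ into $P_{a,b}(x)$ of Theorem~\ref{thm:main2}. One finds $(a^a/(b^bc^c))^{\binom{x+1}{2}} = 4^{\binom{x+1}{2}} = 2^{x^2+x}$, the exponent on $x$ collapses to $\tfrac{1}{12}(\tfrac{1}{2}-2)-\tfrac{1}{4} = -\tfrac{3}{8}$, and the denominator becomes $(\tfrac{\pi}{e}\,x)^{x/2}$; combining the leftover factor $2^x$ with $(e/\pi)^{x/2}$ yields $(4e/\pi)^{x/2}$, producing the stated $P_{2,1}(m)$. For the constant I would insert the explicit expressions for $\MF_1$ and $\MF_2$ read off from Table~\ref{tbl:Fk} into $\FC_{2,1} = \MF_2/\MF_1^2$: the powers of $\MA$ combine to $\MA^{4-5/2} = \MA^{3/2}$, the $e$-exponent becomes $\tfrac{1}{24}-\tfrac{1}{6} = -\tfrac{1}{8}$, the $(2\pi)$-factors give $(2\pi)^{-1/4}$, and the factor $2^{5/24}$ remains. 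Multiplying by the extra $(2\pi)^{-1/4}$ from $\MC_{2,1} = \FC_{2,1}(2\pi)^{-1/4}$ delivers the claimed closed form.

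For the Catalan product I would use the telescoping identity above, so I need $(m+1)!$ to the right asymptotic order in terms of $m$. Starting from $\log\Gamma(m+2) = (m+\tfrac{3}{2})\log(m+1) - (m+1) + \tfrac{1}{2}\log(2\pi) + o(1)$ and expanding $\log(m+1) = \log m + 1/m + O(1/m^2)$, I obtain the clean form $(m+1)! \sim \sqrt{2\pi}\,m^{m+3/2}e^{-m}$ as $m \to \infty$. Dividing through, the factor $e^m$ merges with $(4e/\pi)^{m/2}$ into $(4e^3/\pi)^{m/2}$, the exponents of $m$ add to $3m/2 + 15/8$, and the prefactor $(2\pi)^{-1/2}$ rescales $\MC_{2,1}$ into $\MC_{\mathrm{Cat}} = \MC_{2,1}(2\pi)^{-1/2}$.

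No step is genuinely hard; the argument is essentially bookkeeping of exponents. The only mildly delicate point is expressing $(m+1)!$ asymptotically in terms of $m$ rather than $m+1$, since a naive substitution would leave a spurious factor $e$; this is precisely absorbed by the linear term in the expansion of $\log(m+1)$, which is why the $(4e/\pi)^{m/2}$ of the binomial product becomes $(4e^3/\pi)^{m/2}$ in the Catalan product.
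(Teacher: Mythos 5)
Your proposal is correct and follows essentially the same route as the paper: specialize Theorem~\ref{thm:main2} to $a=2$, $b=c=1$, read off $\MC_{2,1}$ from Table~\ref{tbl:Fk}, and divide by $(m+1)!\sim\sqrt{2\pi}\,m^{m+3/2}e^{-m}$ for the Catalan product. All the exponent bookkeeping checks out, including the point about absorbing the shift from $m+1$ to $m$, which the paper handles via $m+1 = m(1+\tfrac{1}{m})$ rather than your $\log\Gamma(m+2)$ expansion.
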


\begin{remark}
See sequences \seqnum{A007685} and \seqnum{A003046} in the OEIS~\cite{OEIS} for products
of consecutive central binomial coefficients and Catalan numbers, respectively.
Similar finite products appear in a table of Gould~\cite{Gould:1994}.
Zeilberger~\cite{Zeilberger:1999} gave a short proof of a conjecture of Chan, Robbins,
and Yuen that these products of the Catalan numbers are connected to volumes of certain
polytopes.
\end{remark}

The next example shows a further computation, where some gamma factors occur.

\begin{example}
Asymptotically, we have
\[
  \prod_{n=1}^m \binom{5n}{2n} \sims \MC_{5,2} \times
  \frac{\left(\frac{5^5}{2^2 \, 3^3}\right)^{\binom{m+1}{2}}}
  {\left(\frac{2\pi}{e}\frac{6}{5}m\right)^{\frac{m}{2}} m^{\frac{109}{360}}}
  \quad \text{as\ } m \to \infty,
\]
where
\[
  \MC_{5,2} = \frac{5^{\frac{1}{3}}}{2^{\frac{5}{24}} 3^{\frac{11}{36}}} \,
  \frac{\MA^{\frac{19}{30}}}{(2\pi)^{\frac{11}{15}} e^{\frac{19}{360}} \phi^{\frac{1}{10}}} \,
  \frac{\Gamma(\frac{1}{5})^{\frac{3}{5}} \Gamma(\frac{2}{5})^{\frac{1}{5}}}
  {\Gamma(\frac{1}{3})^{\frac{1}{3}}} \approx 0.6129670404054601065382712\ndots.
\]
Coincidentally, the exponent $\tfrac{109}{360}$ has a simple continued fraction expansion:
\[
  \tfrac{109}{360} = [0;3,3,3,3,3]
  = \tfrac{1}{3+\tfrac{1}{3+\tfrac{1}{3+\tfrac{1}{3+\tfrac{1}{3}}}}}.
\]
\end{example}

The asymptotic product of binomial coefficients in a row of Pascal's triangle is given,
as follows.

\begin{theorem} \label{thm:bin}
Asymptotically, we have
\[
  \prod_{\nu=0}^n \binom{n}{\nu} \sims \MC_{\mathrm{row}} \times P_{\mathrm{row}}(n)
  \quad \text{as\ } n \to \infty,
\]
where
\[
  P_{\mathrm{row}}(x) = \frac{e^{\frac{x^2}{2}+x}}{(2\pi)^{\frac{x}{2}} \,
  x^{\frac{x}{2}+\frac{1}{3}}}
\]
and
\[
  \MC_{\mathrm{row}} = \left( \MF_1 \, (2\pi)^{\frac{1}{4}} \right)^{-1}
  = \frac{\MA^2}{(2\pi)^{\frac{1}{2}} \, e^{\frac{1}{12}}}
  \approx 0.6036486760360103196707021\ndots.
\]
\end{theorem}

By Theorem~\ref{thm:main2}, the constants $\MC_{a,b}$ lie in the interval
$[0.590727\ndots, 0.631618\ndots)$.
It is no coincidence that $\MC_{\mathrm{row}}$ also lies in this interval.
As shown below, Theorem~\ref{thm:bin} is a limiting case of Theorem~\ref{thm:main2}.
To make the products comparable, we need an additional factor for convergence.

\begin{theorem} \label{thm:bin2}
Let $n \geq 1$. We have
\[
  \lim_{a \to \infty} \MC_{a,1} = \MC_{\mathrm{row}} \andq
  \lim_{a \to \infty} a^{-\binom{n+1}{2}} \prod_{\nu=1}^n \binom{a \nu}{\nu}
  = \prod_{\nu=1}^n \binom{n}{\nu},
\]
implying that
\[
  \lim_{a \to \infty} a^{-\binom{x+1}{2}} P_{a,1}(x) = P_{\mathrm{row}}(x).
\]
\end{theorem}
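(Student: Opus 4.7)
The plan is to establish each of the three stated limits separately; although the word ``implying'' suggests a formal dependence, in practice the third limit is most easily verified by a direct calculation that reuses the same ingredients as the first two.

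For the first limit, I would simply plug $b=1$ (so $c=a-1$) into the formula $\FC_{a,b} = \MF_a / (\MF_b \MF_c)$ from Theorem~\ref{thm:main2} and use the already-stated fact that the sequence $(\MF_k)_{k \geq 1}$ decreases to $1$. Sending $a \to \infty$ gives $\MF_a \to 1$ and $\MF_{a-1} \to 1$, hence $\FC_{a,1} \to 1/\MF_1$, and then $\MC_{a,1} = \FC_{a,1}(2\pi)^{-1/4} \to (\MF_1 (2\pi)^{1/4})^{-1} = \MC_{\mathrm{row}}$.

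For the second limit, the observation is that $\binom{a\nu}{\nu} = \frac{(a\nu)(a\nu-1)\cdots(a\nu-\nu+1)}{\nu!}$ is a polynomial in $a$ of degree $\nu$ with leading coefficient $\nu^\nu/\nu!$. Thus $a^{-\nu}\binom{a\nu}{\nu} \to \nu^\nu/\nu!$ as $a \to \infty$, and multiplying over $\nu = 1,\ldots,n$ yields
\[
a^{-\binom{n+1}{2}} \prod_{\nu=1}^n \binom{a\nu}{\nu} \longrightarrow \prod_{\nu=1}^n \frac{\nu^\nu}{\nu!}.
\]
The remaining task is the combinatorial identity $\prod_{\nu=1}^n \nu^\nu/\nu! = \prod_{\nu=1}^n \binom{n}{\nu}$. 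I would verify this by rewriting $\prod_{\nu=1}^n \nu! = \prod_{k=1}^n k^{n-k+1}$, so that both sides equal $\prod_{k=1}^n k^{2k-n-1}$ (using also $\prod_{\nu=0}^n \binom{n}{\nu} = (n!)^{n+1}/(\prod_{\nu=0}^n \nu!)^2$).

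The third limit is a direct computation with the explicit form of $P_{a,1}$ from Theorem~\ref{thm:main2}. I would treat the three factors separately: the base-to-the-$\binom{x+1}{2}$ factor contributes $a^{-\binom{x+1}{2}}\bigl(a^a/(a-1)^{a-1}\bigr)^{\binom{x+1}{2}} = \bigl((1+\tfrac{1}{a-1})^{a-1}\bigr)^{\binom{x+1}{2}} \to e^{\binom{x+1}{2}}$; the factor $\bigl(\tfrac{2\pi(a-1)}{ea}x\bigr)^{-x/2}$ tends to $(2\pi x/e)^{-x/2}$; and the exponent of $x$ tends to $\tfrac{1}{12}(0-1) - \tfrac14 = -\tfrac13$. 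Collecting these three pieces produces exactly $e^{x^2/2+x}(2\pi)^{-x/2} x^{-x/2-1/3} = P_{\mathrm{row}}(x)$. The main obstacle here is not conceptual but bookkeeping: one must keep track of the various fractional exponents and confirm that they combine cleanly, in particular that the $e^{x/2}$ arising from $(a-1)/a \to 1$ in the base $2\pi(a-1)/(ea)$ merges correctly with $e^{\binom{x+1}{2}}$ to give the exponent $x^2/2 + x$ of $e$ in $P_{\mathrm{row}}$.
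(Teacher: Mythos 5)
Your proposal is correct, and the first two limits are handled essentially as in the paper: the first follows from $\MF_a, \MF_{a-1} \to 1$, and the second from the term-by-term limit $a^{-\nu}(a\nu)_\nu \to \nu^\nu$ together with the identity $\prod_{\nu=1}^n \nu^\nu / \prod_{\nu=1}^n \nu! = \prod_{\nu=0}^n \binom{n}{\nu}$ (the paper cites this as \eqref{eq:facpow}, writing $\prod_{\nu=1}^n (n)_\nu = \prod_{\nu=1}^n \nu^\nu$, whereas you verify it by counting exponents via $\prod_{\nu=1}^n \nu! = \prod_{k=1}^n k^{n-k+1}$ --- equivalent and equally fine). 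The genuine difference is in the third limit: the paper argues that since the asymptotic constants and the products each converge, ``so do their asymptotic formulas,'' and explicitly leaves the direct verification to the reader; you instead carry out that direct verification, splitting $a^{-\binom{x+1}{2}}P_{a,1}(x)$ into the factor $\bigl((1+\tfrac{1}{a-1})^{a-1}\bigr)^{\binom{x+1}{2}} \to e^{(x^2+x)/2}$, the factor $\bigl(\tfrac{2\pi}{e}\tfrac{a-1}{a}x\bigr)^{-x/2} \to (2\pi x)^{-x/2}e^{x/2}$, and the exponent $\tfrac{1}{12}(\tfrac1a - \tfrac{a}{a-1}) - \tfrac14 \to -\tfrac13$, which indeed recombine to $P_{\mathrm{row}}(x)$. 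Your route is arguably the more rigorous one here, since the paper's ``implying'' argument glosses over an interchange of the limits $a \to \infty$ and $n \to \infty$; what it costs you is only the bookkeeping you acknowledge, and what it buys is a pointwise identity for all $x$ rather than an inference from asymptotic coincidence.
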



\section{Proofs}

\begin{proof}[Proof of Theorem~\ref{thm:main}]
We rewrite the terms of \eqref{eq:facFk} for $m \to \infty$, as follows:
\[
  \prod_{n=1}^m (kn)! \sims c_k \times f_k(m).
\]
Let $r \geq 2$, and let $a = b_1 + \cdots + b_r$ and $\bb = (b_1, \ldots, b_r)$
with $b_\nu \geq 1$. By definition, we get
\[
  \prod_{n=1}^m \binom{a n}{b_1 n, \, b_2 n, \, \cdots\!, \, b_r n} \sims
  \frac{c_a}{c_{b_1} \cdots c_{b_r}} \times \frac{f_a(m)}{f_{b_1}(m) \cdots f_{b_r}(m)}.
\]
Since $a - b_1 - \cdots - b_r = 0$, terms of the form $\omega^\tau$ vanish in the
fractions above, when $\omega$ is independent of~$\tau$, and $\tau$ is a placeholder
for $a$ and $b_1,\ldots,b_r$. Thus, we infer from \eqref{eq:facFk} that
\[
  \MC_{a,\bb} = \frac{c_a}{c_{b_1} \cdots c_{b_r}}
  = \FC_{a,\bb} \, (2\pi)^{\frac{1}{4}(1-r)} \textq{with}
  \FC_{a,\bb} = \frac{\MF_a}{\MF_{b_1} \cdots \MF_{b_r}}.
\]
Further, this provides
\[
  \frac{f_a(m)}{f_{b_1}(m) \cdots f_{b_r}(m)}
  = \frac{P_a(m)}{P_{b_1}(m) \cdots P_{b_r}(m)},
\]
where terms are canceled out such that
\[
  P_k(x) = \left( k^k \right)^{\binom{x+1}{2}}
  \left( \tfrac{2\pi k}{e} \, x \right)^{\frac{x}{2}}
  x^{\frac{3k+1}{12k}} \quad (k \geq 1).
\]
It remains to show the interval $(0,1)$ for $\FC_{a,\bb}$.
By Corollary~\ref{cor:seq}, we have $1 > \FC_{a,\bb} > 0$, since $a > b_\nu$ and
$\MF_{b_\nu} > \MF_a > 1$ for $1 \leq \nu \leq r$. We consider two limiting cases
for the infimum and supremum. First, for $r \geq 2$,
let $a = r$ and $\bb = (1, \ldots, 1)$. Then we have
$\lim\limits_{r \to \infty} \FC_{a,\bb} = \lim\limits_{r \to \infty} \MF_r/\MF_1^r = 0$.
Second, Lemma~\ref{lem:interval} shows for $r=2$ the supremum~$1$.
This completes the proof of the theorem.
\end{proof}

\begin{proof}[Proof of Theorem~\ref{thm:main2}]
Let $a > b \geq 1$ and $c=a-b$. Since $\binom{an}{bn} = \binom{an}{bn, cn}$,
we use Theorem~\ref{thm:main} with parameters $r=2$, $a \geq 2$, and $\bb=(b,c)$
to get the result. The computation of $P_{a,b}(x) = P_a(x) / ( P_b(x) P_c(x))$
follows directly. The interval $[\FC_{2,1}, 1)$ for $\FC_{a,b}$ is given by
Lemma~\ref{lem:interval}, and the value of $\FC_{2,1}$ is derived from
Table~\ref{tbl:Fk}.
\end{proof}

\begin{proof}[Proof of Corollary~\ref{cor:cat}]
We first consider the product of central binomials. Therefore, we have $a=2$
and $b=1$. By Table~\ref{tbl:Fk} and the function $P_{2,1}$, we compute by
Theorem~\ref{thm:main2} that
\begin{equation} \label{eq:c21}
  \MC_{2,1} = \frac{\MF_2}{\MF_{1}^2} \, (2\pi)^{-\frac{1}{4}}
  = \frac{2^{\frac{5}{24}}\,\MA^{\frac{3}{2}}}{(2\pi)^{\frac{1}{2}}\,e^{\frac{1}{8}}}
  \andq P_{2,1}(m) = \frac{2^{m^2}
  \bigl( \frac{4e}{\pi} \bigr)^{\!\!\frac{m}{2}}}{m^{\frac{m}{2} + \frac{3}{8}}}.
\end{equation}
For the product of the Catalan numbers $\Cat_n$, we have to modify the terms
in~\eqref{eq:c21} by considering the extra factor $1/(m+1)!$.
By Stirling's approximation, we have
$m! \sim \sqrt{2\pi m} \, (\tfrac{m}{e})^m$ for $m \to \infty$. Thus,
\[
  (m+1)! \sims (m+1) \sqrt{2\pi m} \, (\tfrac{m}{e})^m
  \sims \sqrt{2\pi} \, m^{\frac{3}{2}} \, (\tfrac{m}{e})^m,
\]
noting that $m+1 = m \, (1 + \frac{1}{m})$. Finally, we obtain the related terms
\[
  \MC_{\mathrm{Cat}} = \MC_{2,1} (2\pi)^{-\frac{1}{2}} \andq
  P_{2,1}(m) \, \frac{e^m}{m^{m + \frac{3}{2}}}
  = \frac{2^{m^2} \bigl( \frac{4 e^3}{\pi} \bigr)^{\!\!\frac{m}{2}}}
  {m^{\frac{3}{2}m + \frac{15}{8}}}. \qedhere
\]
\end{proof}

Let $(n)_{\nu}$ denote the falling factorial such that $\binom{n}{\nu} = (n)_{\nu} / \nu!$.
We can neglect the term $\binom{n}{0} = 1$ below.
We give a very short and elegant proof of Theorem~\ref{thm:bin} by using results
of the former sections, as follows.

\begin{proof}[Proof of Theorem~\ref{thm:bin}]
Let $n \geq 1$. First, note that
\begin{equation} \label{eq:facpow}
  \prod_{\nu=1}^n (n)_{\nu} = \prod_{\nu=1}^n \nu^\nu, \textq{which implies}
  \prod_{\nu=1}^n \binom{n}{\nu} = \prod_{\nu=1}^n \nu^\nu \bigg/ \prod_{\nu=1}^n \nu!.
\end{equation}
Second, by \eqref{eq:facFk} with $k=1$, we have
\begin{equation} \label{eq:facprod}
  \prod_{\nu=1}^n \nu!  \sims
  \MF_1 \, \MA \, (2\pi)^{\frac{1}{4}} \times
  \left( e^{-\frac{3}{2}} n \right)^{\binom{n+1}{2}}
  \left( 2\pi e^{-\frac{1}{2}} n \right)^{\frac{n}{2}} n^{\frac{5}{12}}
  \quad \text{as\ } n \to \infty.
\end{equation}
Third, we need the asymptotic formula~\eqref{eq:GK} of the hyperfactorial.
By~\eqref{eq:facpow}, we divide each side of~\eqref{eq:GK} by~\eqref{eq:facprod},
respectively, which easily yields the result.
\end{proof}

\begin{proof}[Proof of Theorem~\ref{thm:bin2}]
Let~$a, n \geq 1$. Since $\lim\limits_{a \to \infty} \MF_a = 1$, we deduce that
\[
  \lim_{a \to \infty} \MC_{a,1} = \lim_{a \to \infty}
  \frac{\MF_a}{\MF_{1} \, \MF_{a-1}} (2\pi)^{-\frac{1}{4}} = \MC_{\mathrm{row}}.
\]
Note that
\[
  a^{-\binom{n+1}{2}} \prod_{\nu=1}^n (a \nu)_{\nu}
  =\! \prod_{\nu=1}^n \left( a^{-\nu} (a \nu)_{\nu} \right)
  = 1 \cdot \left( 2 \cdot (2-\tfrac{1}{a}) \right)
  \cdot \left( 3 \cdot (3-\tfrac{1}{a}) \cdot (3-\tfrac{2}{a}) \right)
  \cdots \left( n \cdots (n-\tfrac{n-1}{a}) \right).
\]
Hence, we infer from \eqref{eq:facpow} that
\[
  \lim_{a \to \infty} a^{-\binom{n+1}{2}} \prod_{\nu=1}^n (a \nu)_{\nu}
  = \prod_{\nu=1}^n \nu^\nu = \prod_{\nu=1}^n (n)_{\nu},
  \textq{and thus}
  \lim_{a \to \infty} a^{-\binom{n+1}{2}} \prod_{\nu=1}^n \binom{a \nu}{\nu}
  = \prod_{\nu=1}^n \binom{n}{\nu}.
\]
Since the asymptotic constants and products coincide, respectively,
so do their asymptotic formulas. Obviously, the equality of the asymptotic
formulas can be shown directly. We leave the details to the reader.
\end{proof}


\section{Hirschhorn's theorem revisited}

Hirschhorn~\cite{Hirschhorn:2013} showed the following theorem,
apparently unaware of \cite{Kellner:2009} published a few years before.

\begin{theorem}[Hirschhorn~{\cite[Theorem]{Hirschhorn:2013}}]
Asymptotically, we have for $n \geq 1$ that
\[
  \prod_{k=0}^n \binom{n}{k} \sims \HC^{-1}
  \frac{e^{n(n+2)/2}}{n^{(3n+2)/6}(2\pi)^{(2n+1)/4}}
  \exp \left( - \sum_{\nu \geq 1} \frac{B_{\nu+1}+B_{\nu+2}}{\nu(\nu+1)}
  \frac{1}{n^\nu} \right) \quad \text{as\ } n \to \infty,
\]
where
\[
  \HC = \lim_{n \to \infty} n^{-\frac{1}{12}} \prod_{k=1}^{n}
  \left( k! \Big/ \sqrt{2\pi k} \left( \frac{k}{e} \right)^{\!\!k} \right)
  \approx 1.046335066770503180980950656977760\ndots.
\]
\end{theorem}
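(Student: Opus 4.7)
The plan is to work directly from the double-counting identity
\[
  \prod_{k=0}^n \binom{n}{k} = \frac{H(n)^2}{(n!)^{n+1}},
\]
where $H(n) = \prod_{k=1}^n k^k$ is the hyperfactorial; this follows from $\prod_{k=0}^n k! = (n!)^{n+1}/H(n)$, which is the $k=1$ version of~\eqref{eq:facpow} solved for the superfactorial. Taking logarithms reduces the claim to assembling full asymptotic expansions of $\log n!$ and $\log H(n)$ and combining them.

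First I will plug in the Stirling series for $\log n!$ (Bernoulli corrections at odd powers $1/n^{2j-1}$) and apply Euler--Maclaurin directly to $\sum_{k=1}^n k\log k$ for $\log H(n)$: since $f(x)=x\log x$ has $f^{(2j-1)}(x)=-(2j-3)!/x^{2j-2}$ for $j\ge 2$, the resulting tail lives purely at even powers $1/n^{2j}$, and the Euler--Maclaurin constant is pinned to $\log\MA$ by~\eqref{eq:GK}. Substituting into $\log P(n)=2\log H(n)-(n+1)\log n!$, the leading parts collapse after short algebra to
\[
  -\bigl(\tfrac{n}{2}+\tfrac{1}{3}\bigr)\log n+\tfrac{n(n+2)}{2}-\tfrac{n+1}{2}\log(2\pi)+2\log\MA-\tfrac{1}{12},
\]
where the isolated $-1/12$ comes from splitting the $j=1$ Stirling term $(n+1)/(12n)=1/12+1/(12n)$ into a constant main part and a $1/n$ correction. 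Rewriting $-\tfrac{n+1}{2}\log(2\pi)=-\tfrac{2n+1}{4}\log(2\pi)-\tfrac{1}{4}\log(2\pi)$ produces the factor $(2\pi)^{(2n+1)/4}$ in the denominator and reads off the main constant as $\MA^{2}e^{-1/12}(2\pi)^{-1/4}$, which is $\HC^{-1}$ once we verify $\HC=(2\pi)^{1/4}e^{1/12}\MA^{-2}=\MF_1$ by Table~\ref{tbl:Fk}.

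Next I will match the subleading tails. Let $T(n)$ denote the Euler--Maclaurin tail in $2\log H(n)$ (even powers only) and $\tilde S(n)$ the Stirling tail in $(n+1)\log n!$ after removing the $1/12$ just extracted; Hirschhorn's series should equal $T(n)-\tilde S(n)$. Splitting by parity and using $B_{2m+1}=0$ for $m\ge 1$: at odd $\nu=2m-1$ only the ``$+1$'' portion of the $(n+1)$ prefactor in $\tilde S$ contributes, yielding $-B_{2m}/((2m)(2m-1))=-B_{\nu+1}/(\nu(\nu+1))$; at even $\nu=2m$ both the ``$n$'' portion of the $j=m+1$ Stirling term and the $j=m$ hyperfactorial term contribute, and the algebraic collapse
\[
  -\tfrac{1}{m(2m+1)(2m+2)}-\tfrac{1}{(2m+1)(2m+2)}=-\tfrac{1}{2m(2m+1)}
\]
yields $-B_{2m+2}/((2m)(2m+1))=-B_{\nu+2}/(\nu(\nu+1))$. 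In both parities this equals $-(B_{\nu+1}+B_{\nu+2})/(\nu(\nu+1))$, reproducing Hirschhorn's series exactly.

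The main obstacle will be the bookkeeping in the second step: the $(n+1)$ prefactor on the Stirling tail scatters each $1/n^{2j-1}$ correction into adjacent even and odd powers of $1/n$, which must then be aligned with the hyperfactorial's purely even-power corrections. Once the parity split is made the matching is mechanical. As an independent sanity check, the identity $\HC=\MF_1$ can be verified directly by substituting the superfactorial asymptotic of Theorem~\ref{thm:kel} (with $k=1$) into the defining limit for $\HC$ and cancelling the common factors of $n^{n(n+2)/2}$, $e^{\pm n^2/4}$, $(2\pi)^{n/2}$, and $\MA$.
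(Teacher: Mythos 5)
Your proposal is correct, but note that the paper does not actually prove this statement: it is quoted from Hirschhorn with attribution, the original proof being described as ``very lengthy and technical,'' and the paper's own contribution in that section is only the identification $\HC = \MF_1 = (2\pi)^{1/4}e^{1/12}\MA^{-2}$ (via Theorem~\ref{thm:bin} and Table~\ref{tbl:Fk}) together with the remark that $\HC^{-1}$ is not the full asymptotic constant. Your route is essentially the all-orders extension of the paper's proof of Theorem~\ref{thm:bin}: the same decomposition through the hyperfactorial and superfactorial (your $\prod_{k=0}^n\binom{n}{k}=H(n)^2/(n!)^{n+1}$ is equivalent to \eqref{eq:facpow}), but carrying the complete Bernoulli tails of Stirling's series and of Euler--Maclaurin for $\sum k\log k$ instead of stopping at the constant term. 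I checked the pieces: the leading collapse to $\tfrac{n(n+2)}{2}-(\tfrac{n}{2}+\tfrac13)\log n-\tfrac{n+1}{2}\log(2\pi)+2\log\MA-\tfrac1{12}$ is right and yields $\HC^{-1}(2\pi)^{-(2n+1)/4}$; the parity bookkeeping is right, since the tail of $\log H(n)$ is $-\sum_{j\ge2}\frac{B_{2j}}{2j(2j-1)(2j-2)}n^{-(2j-2)}$ (even powers only, constant pinned by \eqref{eq:GK}), and your algebraic identity $\frac{1}{m(2m+1)(2m+2)}+\frac{1}{(2m+1)(2m+2)}=\frac{1}{2m(2m+1)}$ correctly merges the two even-power contributions; the first few coefficients come out as $-\tfrac1{12},\,\tfrac1{180},\,\tfrac1{360}$ at $\nu=1,2,3$, matching $-(B_{\nu+1}+B_{\nu+2})/(\nu(\nu+1))$. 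The one place that deserves explicit care in a written version is the sign convention of the Euler--Maclaurin remainder for $f(x)=x\log x$, which you use correctly but do not state; and the sanity check $\HC=\MF_1$ does follow from \eqref{eq:facprod}, \eqref{eq:GK}, and Stirling as you claim. What your approach buys is a self-contained and considerably shorter derivation of Hirschhorn's full expansion; what the paper buys by citing it is brevity, since only the constant $\HC$ is needed for its purposes.
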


He gave a very lengthy and technical proof of the above theorem, but deriving
an asymptotic expansion by using divergent sums that involve Bernoulli numbers.
The constant $\HC$ was published by Hirschhorn as sequence \seqnum{A213080}
on the OEIS~\cite{OEIS}. By Theorem~\ref{thm:bin} and Table~\ref{tbl:Fk},
it finally turns out that
\[
  \HC = \MF_1 = (2\pi)^{\frac14} \, e^{\frac{1}{12}} / \MA^2.
\]
Note that $\HC^{-1}$ is not the complete asymptotic constant, compared with
Theorem~\ref{thm:bin}, since one of the above terms has a constant factor, namely,
\[
  (2\pi)^{(2n+1)/4} = (2\pi)^{\frac{1}{4}} \times (2\pi)^{\frac{n}{2}}.
\]


\section*{Acknowledgment}

We would like to thank the referee for useful suggestions that improved
the quality of the paper.


\bibliographystyle{amsplain}

\end{document}